\newtheorem{prethm}{{\bf Theorem}}
\newenvironment{thm}{\begin{prethm}{\hspace{-0.5
em}{\bf.}}}{\end{prethm}}
\newtheorem{prepro}[prethm]{{\bf Theorem}}
\newtheorem{preprop}[prethm]{{\bf Proposition}}
\newenvironment{prop}{\begin{preprop}{\hspace{-0.5
em}{\bf.}}}{\end{preprop}}
\newtheorem{precor}[prethm]{{\bf Corollary}}
\newenvironment{cor}{\begin{precor}{\hspace{-0.5
em}{\bf.}}}{\end{precor}}
\newtheorem{predefinition}[prethm]{{\bf Definition}}
\newenvironment{definition}{\begin{predefinition}{\hspace{-0.5
em}{\bf.}}}{\end{predefinition}}
\newtheorem{preconj}[prethm]{{\bf Conjecture}}
\newtheorem{preremark}[prethm]{{\bf Remark}}
\newenvironment{remark}{\begin{preremark}\rm{\hspace{-0.5
em}{\bf.}}}{\end{preremark}}
\newtheorem{preexample}[prethm]{{\bf Example}}
\newenvironment{example}{\begin{preexample}\rm{\hspace{-0.5
em}{\bf.}}}{\end{preexample}}
\newtheorem{prelem}[prethm]{{\bf Lemma}}
\newenvironment{lem}{\begin{prelem}{\hspace{-0.5
em}{\bf.}}}{\end{prelem}}
\newtheorem{prelam}{{\bf Lemma}}
\newtheorem{preproof}{{\bf Proof.}}
\newenvironment{proof}[1]{\begin{preproof}{\rm
#1}\hfill{$\Box$}}{\end{preproof}}
\title{\bf \large Tilting Modules Over Gorensetein $T_n^d$-Injective Gorensetein $T_n^d$-flat Modules
\thanks
{{\it Key Words}: (n,T)-Coherent, Gorensetein $T_n^d$-Injective, Gorensetein $T_n^d$-flat, Tilting}
\thanks {2010{ \it Mathematics Subject Classification}: 13D07; 16D40; 18G25;}}
\author{{\normalsize M. Amini${}$  {}}\vspace{3mm}\\
{\footnotesize{${}^{\mathsf{}}$\it }}\\
{\footnotesize{${}^{\mathsf{}}$\it Department of Mathematics, Payame Noor University, Tehran, Iran.}}\\
{\footnotesize{}}\\
{\footnotesize{}}
{\footnotesize{$\mathsf{}$\quad\quad
$\mathsf{ mostafa.amini@pnu.ac.ir}$\quad\quad $\mathsf{}$}}}
\date{}
\begin{document}
\maketitle
{\small\noindent �{\bf{Abstract.}} Let  $T$ be a tilting module. In this paper, { Gorenstein $T_n^{d}$-injective}  and {Gorenstein $T_n^{d}$-flat} modules are introduced. If $G\in { \rm Cogen}T$ (resp; $G\in {\rm  Gen}T$), then $G$  is called  {Gorenstein $T_n^{d}$-injective (resp; Gorenstein $T_n^{d}$-flat) }  if there exists the exact sequence 
${\mathbf{M}}= \cdots\rightarrow M_1\rightarrow M_{0}\rightarrow M^0\rightarrow
M^1\rightarrow\cdots$ of  ${\mathcal{ET}}$-modules (resp; ${\mathcal{FT}}$-modules)  with $G=\ker(M^0\rightarrow M^1)$ such that ${\rm {\mathcal{E}}}_{T}^{d}(U,{\mathbf{M}})$ (resp; ${\Gamma}_{d}^{T}(U,{\mathbf{F}})$) leaves this sequence exact whenever $U\in {\rm  F.Pres}^{n}T$ with ${\rm T.pdim}(U)<\infty$ (resp; ${ \rm T.fdim}(U)<\infty$).  Also, some characterizations of rings over  Gorenstein $T_n^{d}$-injective  and Gorenstein $T_n^{d}$-flat modules are given.

\vspace{5mm} \noindent{\bf\large 1. Basic Definitions and Notations}\\\\
Throughout this paper,
$R$ is an associative ring with non-zero identity, all modules are
unitary left $R$-modules. First we recall some known notions and facts needed in the sequel. Let $R$ be a ring and $T$ an R-module. Then 
\begin{enumerate}
\item[(1)] A module $M$ is said to be \textit{cogenerated},
by $T$, denoted by $M\in {Cogen}T$,
(resp; \textit{generated}, denoted $M\in{Gen}T$) by $T$
if there exists an exact sequence $0\rightarrow M\rightarrow T^n$ (resp; $ T^{(n)}\rightarrow M\rightarrow 0 $ ),
 for some positive integer $n$.
\item[(2)] We denote by
${Prod}T$ (resp;
${F.Prod}T$), the class of modules isomorphic to direct
summands of direct product of copies (resp; finitely many copies) of $T$.

\item[(3)] We denote by
${Add}T$ (resp;
${F.Add}T$), the class of modules isomorphic to direct
summands of direct sum of copies (resp; finitely many copies) of $T$.

\item[(4)] By
$Copres^{n}T$ (resp; $F.Copres^{n}T$) and $Copres^{\infty}T$ (resp; $F.Copres^{\infty}T$), we denote the set of all modules $M$ such
that there exists exact sequences $$\begin{array}{cccccccccccc}
\vspace{-.10cm} 0 \longrightarrow M\longrightarrow
T_{0}\longrightarrow T_{1}\longrightarrow\cdots\longrightarrow T_{n-1}\longrightarrow T_{n}
\end{array}$$
and
$$\begin{array}{cccccccccccc} \vspace{-.10cm}
0 \longrightarrow M\longrightarrow
T_{0}\longrightarrow T_{1}\longrightarrow\cdots\longrightarrow T_{n-1}\longrightarrow T_{n}\longrightarrow\cdots
,\end{array}$$ respectively, where $T_{i}\in {\rm Prod}T$ (resp. $T_{i}\in{\rm F.Prod}T$), for
every $i\geq 0$. 

\item[(5)]  Following \cite{qv}, a module $T$ is called {\it tilting} if it satisfies the following conditions:\\
(a) $ pd(T)\leq 1$, where $pd(T)$ denotes the {\it projective dimension of $T$}.\\
(b)$ Ext^{i}(T,T^{(\lambda)})=0$, for each $i>0$ and for every cardinal $\lambda$.\\
(c) There exists the exact sequence $0\rightarrow R\rightarrow
T_{0}\rightarrow T_{1}\rightarrow 0$, where $T_0,T_1\in { \rm Add}T$.

\item[(6)] 
 For any tilting module $T$, if $N\in {\rm Cogen}T$ and $M\in {\rm Gen}T$ , then \cite[Proposition 2.1]{shaveisicam} implies that ${\rm Cogen}T={\rm Copres}^\infty T$ and ${\rm Gen}T={\rm Pres}^\infty T$ .
This shows that any module cogenerated by $T$ and  any module generated by $T$ has an ${\rm Prod}T$-resolution and ${\rm Add}T$-resolution.

\item[(7)] For any homomorphism $f$, we denote by ${ker}f$ and ${im}f$, the kernel and image of $f$, respectively. Let  $N\in {\rm
Cogen}T$ and $M\in {\rm
Gen}T$ be two modules, where $T$ is tilting module. We define the functors $$ {\Gamma}_{n}^{T}(M,-):= \frac{{\rm ker}(
\delta_{n}\otimes 1_{B})}{{\rm im}(\delta_{n+1}\otimes 1_{B})};\ \ {\mathcal{E}}_{T}^{n}(-,N):= \frac{{\rm ker}\delta_{*}^{n}}{{\rm
im} \delta_{*}^{n-1}},$$ where
$$\begin{array}{cccccccccc} \vspace{-.3cm}
&&&\delta_{2}&&\delta_{1}&&\delta_{0}\\
\cdots& \longrightarrow&T_{2}&\longrightarrow&T_{1}& \longrightarrow & T_{0}
&\longrightarrow & M &\longrightarrow 0
\end{array}$$ is an
${\rm Add}T$-resolution of $M$, 

and
$$\begin{array}{cccccccccc} \vspace{-.3cm}
&\delta_{0}&&\delta_{1}&&\delta_{2}\\
0\longrightarrow N& \longrightarrow&T^{0}&\longrightarrow&T^{1}& \longrightarrow &\cdots
\end{array}$$ is an
${\rm Prod}T$-resolution of $N$ and $\delta_{*} ^{n}={ Hom}(\delta_n,id_B)$,
for every
$i\geq 0$.

 Let $M\in {\rm Gen}T$ be a module. A similar proof to that of \cite[Lemma 2.11]{seam1} shows that ${\mathcal{E}}^0_T(M,-)\cong {\rm Hom}(M,-)$. Similarly, it is seen that $\Gamma_T^0(M,-)\cong M\otimes -$. Moreover, ${\mathcal{E}}^1_T(M,-)=0$ implies that $M\in {\rm Add}T$. If $N\in {\rm Cogen}T$, then ${\mathcal{E}}^1_T(-,N)=0$ implies that $N\in {\rm Prod}T$.
It is clear that ${\rm T.pdim}(M)=n$ if and only if $n$ is the least non-negative integer such that
${\mathcal{E}}^{n+1}_{T}(M,B)=0$, for
any module $B$. 
Naturally, we say that $M$ has
{\it $T$-flat dimension} ({\it $T$-injective dimension}) $n$, denoted by ${ T.fdim}(M)=n$ (${ T.idim}(N)=n$) if $n$ is the least non-negative integer such that
${\Gamma}_{n+1}^{T}(M,B)=0$ (${\mathcal{E}}^{n+1}_{T}(B,N)=0$), for
any module $B$. A module with zero $T$-projective (resp., $T$-injective) dimension is called
{\it $T$-projective {\rm(}resp., $T$-injective{\rm)}}, see \cite{shaveisicam, seam3}.

\item[(8)]
$M$ is said to be $n$-{\it presented} \cite{ N.I, X.J, wu} if there is an exact sequence of $R$-modules
 $ F_{n}\rightarrow F_{n-1}\rightarrow\dots\rightarrow F_1\rightarrow F_0\rightarrow
M\rightarrow$ , where each $F_i$ is a finitely generated free, equivalently projective, $R$-module.
\item[(9)]
$R$ is said to be  $n$-{\it coherent} \cite{N.I, X.J, wu} if every $n$-presented $R$-module
is $(n + 1)$-presented.
\item[(10)]
 $M$  is said to be {\it Gorenstein flat} (resp.,{\it Gorenstein injective})
\cite{ E.J, E.B} if there
is an exact sequence $\cdots\rightarrow I_1\rightarrow I_{0}\rightarrow I^0\rightarrow
I^1\rightarrow\cdots$ of flat (resp., injective ) modules
with $M= {\rm ker}(I^0\rightarrow
I^1)$ such that $U\otimes_{R}-$ (resp; ${\rm Hom}(U,-)$) leaves the sequence exact whenever
$U$ is an injective  module.
\end{enumerate}

\ \ \ A module $M$ is called  {\it $T_n^{d}$-injective} if ${\mathcal{E}}_{T}^{d+1}(U,M)=0$ for every $U\in {\rm F.Pres}^{n}T$. A module $M$ is called  {\it $T_n^{d}$-flat} if ${\Gamma}_{d+1}^{T}(U,M)=0$ for every $U\in {\rm F.Pres}^{n}T$ see \cite{ seam2}.  We denote by ${\mathcal{ET}}$ and ${\mathcal{FT}}$  the class of  $T_n^{d}$-injective modules  belong to ${\rm Cogen}T$ and $T_n^{d}$-flat modules belong to ${\rm Gen}T$, respectivily.
In this paper, $T$ is a tilting module. We introduce the {\it Gorenstein $T_n^{d}$-injective} and {\it Gorenstein $T_n^{d}$-flat} 
 modules. A module $G\in {\rm Cogen}T$  is called  Gorenstein $T_n^{d}$-injective  if there exists the following  exact sequence of  ${\mathcal{ET}}$-modules:
$${\mathbf{M}}= \cdots\longrightarrow M_1\longrightarrow M_{0}\longrightarrow M^0\longrightarrow
M^1\longrightarrow\cdots$$ with $G=\ker(M^0\rightarrow M^1)$ such that ${\mathcal{E}}_{T}^{d}(U,{\mathbf{M}})$ leaves this sequence exact whenever $U\in {\rm F.Pres}^{n}T$ with ${\rm T.pdim}(U)<\infty.$
A module $G\in{\rm Gen}T$ is said to be Gorenstein $T_n^{d}$-flat
 if there exists an exact sequence of ${\mathcal{FT}}$-modules:
$${\mathbf{N}}= \cdots\longrightarrow N_1\longrightarrow N_{0}\longrightarrow N^0\longrightarrow
N^1\longrightarrow\cdots$$  with $G=\ker(N^0\rightarrow N^1)$ such that ${\Gamma}_{d}^{T}(U,{\mathbf{N}})$ leaves this sequence exact whenever $U\in {\rm F.Pres}^{n}T$ with ${\rm T.fdim}(U)<\infty$.
Replacing $T$ by $R$ as an $R$-module, every Gorenstein $T_0^{0}$-injective $R$-module is Gorenstein injective, and every Gorenstein $T_1^{0}$-flat is Gorenstein flat.

A ring $R$  is called  $(n,T)$-{\it coherent} if ${\rm F.Pres}^{n}T={\rm F.Pres}^{n+1}T$.
In Section 2, we study some basic properties of the Gorenstein $T_n^{d}$-flat and Gorenstein $T_n^{d}$-injective modules.
Then some characterizations of $(n,T)$-coherent
rings over Gorenstein $T_n^{d}$-injective and Gorenstein $T_n^{d}$-flat modules are given.

\vspace{5mm}
{\bf\large \noindent 2.  Main Results}

We start with the following lemma.
\begin{lem}\label{2.io}
Let $ 0\rightarrow A\stackrel{\displaystyle f}\rightarrow B\stackrel{\displaystyle g}\rightarrow C\rightarrow 0$ be an exact sequence. Then
\begin{enumerate}
\item [\rm (1)]
If $A\in{\rm Pres}^{n}T$ and $C\in{\rm Pres}^{n}T$, then $B\in{\rm Pres}^{n}T.$
\item [\rm (2)]
If $A\in{\rm Pres}^nT$ and $B\in{\rm Pres}^{n+1}T$, then $C\in{\rm Pres}^{n+1}T.$
\item [\rm (3)]
If $B\in{\rm Pres}^nT$ and $C\in{\rm Pres}^{n+1}T$, then $A\in{\rm Pres}^nT.$
\end{enumerate}
\end{lem}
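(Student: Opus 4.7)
The three parts are horseshoe-style closure properties for $\operatorname{Pres}^{n}T$, generalising the classical shifting lemmas for $n$-presented modules with the class $\operatorname{Add} T$ replacing the finitely generated projectives. The key input used throughout is that every $M\in\operatorname{Pres}^{n}T\subseteq\operatorname{Gen} T = T^{\perp}$ satisfies $\operatorname{Ext}^{1}(T',M)=0$ for every $T'\in\operatorname{Add} T$, via the tilting hypotheses $\operatorname{pd}(T)\le 1$ and $\operatorname{Ext}^{1}(T,T^{(\lambda)})=0$; thus objects of $\operatorname{Add} T$ are relatively projective with respect to $T^{\perp}$, so the classical lifting arguments go through in this relative setting.

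For (1), I would run the standard horseshoe construction. Fix $\operatorname{Add} T$-resolutions $\cdots\to T_{i}^{A}\to\cdots\to T_{0}^{A}\to A\to 0$ and $\cdots\to T_{i}^{C}\to\cdots\to T_{0}^{C}\to C\to 0$, and inductively build a resolution of $B$ with $i$-th term $T_{i}^{A}\oplus T_{i}^{C}$. At each step, lifting the $C$-side differential modulo the $A$-side syzygy is governed by $\operatorname{Ext}^{1}(T_{i}^{C},\Omega_{i}A)$, where $\Omega_{i}A$ denotes the $i$-th syzygy of the $A$-resolution; since $\Omega_{i}A\in T^{\perp}$ and $T_{i}^{C}\in\operatorname{Add} T$, this obstruction vanishes and the induction proceeds cleanly.

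For (2), I would reduce to (1) by a snake-type diagram chase. Let $T_{0}^{B}\twoheadrightarrow B$ begin the given length-$(n{+}1)$ resolution of $B$, so that $L:=\ker(T_{0}^{B}\to B)\in\operatorname{Pres}^{n}T$, and let $K$ be the kernel of the composite surjection $T_{0}^{B}\twoheadrightarrow B\twoheadrightarrow C$. Since $L\subseteq K$ with $K/L\cong A$, one obtains the short exact sequence $0\to L\to K\to A\to 0$; part (1) then yields $K\in\operatorname{Pres}^{n}T$, and splicing a length-$n$ $\operatorname{Add} T$-resolution of $K$ onto $0\to K\to T_{0}^{B}\to C\to 0$ furnishes the required length-$(n{+}1)$ resolution of $C$.

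For (3), I would perform a pullback and again invoke (1). Let $T_{0}^{C}\twoheadrightarrow C$ be the first term of $C$'s resolution with $K:=\ker(T_{0}^{C}\to C)\in\operatorname{Pres}^{n}T$, and set $P:=B\times_{C}T_{0}^{C}$. The pullback yields short exact sequences $0\to K\to P\to B\to 0$ and $0\to A\to P\to T_{0}^{C}\to 0$; applying (1) to the first gives $P\in\operatorname{Pres}^{n}T$. For the second, since $T_{0}^{C}\in\operatorname{Add} T$ is relatively projective over $T^{\perp}$, a Schanuel-type argument exhibits $A$ as a direct summand of $P$, and the closure of $\operatorname{Pres}^{n}T$ under direct summands then gives $A\in\operatorname{Pres}^{n}T$. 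This last extraction is the main obstacle, because both the splitting of the second sequence and the summand-closure step use the full strength of the tilting identities $\operatorname{Ext}^{i}(T,T^{(\lambda)})=0$ rather than just the torsion-class behaviour of $\operatorname{Gen} T$.
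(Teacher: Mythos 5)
Your parts (1) and (2) are correct and essentially coincide with the paper's argument. Part (1) is the same relative horseshoe construction; the paper suppresses the lifting obstruction, which you correctly locate in $\operatorname{Ext}^1(T_i^C,\Omega_i A)$ and which vanishes because the syzygies of an $\operatorname{Add}T$-resolution lie in $\operatorname{Gen}T=T^{\perp}$. Your reduction of (2) to (1) via $K=\ker(T_0^B\twoheadrightarrow B\twoheadrightarrow C)$ is exactly the content of the paper's diagram (their map $h\colon T_0'\oplus T_1\to T_0$ is a surjection onto this same $K$), but packaged so that no separate induction on $n$ is needed; this is a mild improvement in exposition, not a different proof.

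Part (3) has a genuine gap, precisely at the step you yourself flag as the main obstacle. The sequence $0\to A\to P\to T_0^C\to 0$ splits only if $\operatorname{Ext}^1(T_0^C,A)=0$, i.e.\ only if $A\in T^{\perp}=\operatorname{Gen}T$. But $\operatorname{Gen}T$ is merely a torsion class, not closed under kernels of epimorphisms between its members, so this is not available: $A\in\operatorname{Gen}T$ is exactly the $n=0$ instance of the assertion being proved, and the tilting identities $\operatorname{Ext}^i(T,T^{(\lambda)})=0$ concern extensions by objects of $\operatorname{Add}T$, not by $A$, so they cannot supply the missing vanishing. The step really does fail: over the path algebra of the quiver $1\to 2$ with the tilting module $T=P_1\oplus S_1$, the non-split sequence $0\to S_2\to P_1\to S_1\to 0$ has $B=P_1$ and $C=S_1$ in $\operatorname{Add}T\subseteq\operatorname{Pres}^{n}T$ for every $n$, yet $A=S_2\notin\operatorname{Gen}T=\operatorname{Pres}^{0}T$; taking $T_0^C=S_1$ one gets $P\cong P_1$ and the second pullback sequence is the non-split sequence itself. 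So statement (3) as written is false; it becomes true, and your pullback argument then works verbatim (the summand extraction is unproblematic since the complement $T_0^C$ lies in $\operatorname{Add}T$), under the additional hypothesis $A\in\operatorname{Gen}T$, which does hold in every use the paper makes of (3), where $A$ arises as a syzygy in an $\operatorname{Add}T$-resolution. For comparison, the paper's own ``proof'' of (3) is the single sentence ``this is proved similarly,'' so you are not overlooking an argument that the authors actually supply.
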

\begin{proof}
{(1)
We prove the assertion by induction on $n$. If $n=0$, then the commutative
diagram with exact rows
$$
 \begin{array}{ccccccccc}
    0\longrightarrow T^{'}_0& \stackrel{\displaystyle i_0}\longrightarrow T^{'}_0\oplus T^{''}_0&\stackrel{\displaystyle \pi_0}\longrightarrow T^{''}_0\longrightarrow 0 \\
      \hspace{1.4cm} \downarrow h_{0}^{'}&\ \ \ \ \ \ \ \ \ \ \downarrow h_{0}&\ \  \downarrow  h_{0}^{''} &  \\
    0\longrightarrow  A &{\hspace{-.6cm}\stackrel{\displaystyle f} \longrightarrow} \ \ \ \ \  B&{\hspace{-.3cm}\stackrel{\displaystyle g}\longrightarrow} \ \ C\longrightarrow 0\\
 \hspace{1.0cm}  \downarrow  & \ \ \ \ \ \downarrow & {\hspace{-4mm}\downarrow} &  \\
\hspace{1.0cm}  0 &\ \ \ \ \ 0 &   {\hspace{-4mm}0}&
     \end{array}
$$

\noindent exists, where $T'_0,T''_0\in {\rm Add}T$, $i_0$ is the inclusion map, $\pi_0$ is a canonical epimorphism and $h_0=fh_0'$ is epimorphism, by Five Lemma. Let $K_1'=\ker h_0'$, $K_1=\ker h_0$ and $K_1''=\ker h_0''$. So,  $B\in {\rm Pres}^{0}T$. It is clear that $K_1',K_1''\in{\rm Pres}^nT$; so, the induction hypotises implies that $K_1\in {\rm Pres}^nT$. Hence $B\in {\rm Pres}^{n}T$.

(2) First assume that $n=0$. If $B\in{\rm Pres}^1T$ and $A\in{\rm Pres}^0T$,  then the following commutative diagram with exact rows:
\begin{center}
$
 \begin{array}{ccccccccc}
  &\ \ \ \ \ \ T^{'}_0 &\longrightarrow A\longrightarrow 0 \\
     &\ \ \ \ \ \ \ \downarrow\gamma &\downarrow f &  \\
     {\hspace{4cm}T_1}& \stackrel{\displaystyle \alpha_2}\longrightarrow T_{0}&\stackrel{\displaystyle \alpha_1}\longrightarrow B\longrightarrow 0 \\
      & \ \ \ \ \ \|&\downarrow g &  \\
      {\hspace{4cm}T^{'}_{0}\oplus T_1}&\stackrel{\displaystyle h}\longrightarrow T_{0}&\stackrel{\displaystyle g\alpha_1}\longrightarrow C\longrightarrow 0\\
 &&\hspace{-2mm}\downarrow &  \\
  & &  \hspace{-2mm}  0&
     \end{array}
$
\end{center}
\noindent in which the existence of $\gamma$ follows from the exactness of the sequence $${\rm Hom}(T^{'}_0,T_{0}) \rightarrow {\rm Hom}(T^{'}_0,B)\rightarrow 0,$$ since $T^{'}_0$ is $T$-projective.  Also, $h$ is defined by 
$h(t^{'}_0,t_1)= \gamma(t^{'}_0) + \alpha(t_1)$. Therefore, we deduce that $C\in{\rm Pres}^1T$.
For $n>0$, the assertion follows from induction.

(3) This is proved similarly.
%
}
\end{proof}
\begin{definition}\label{2.76}
Let $G$ be a module.
\begin{enumerate}
\item [\rm (1)]
If $G\in {\rm Cogen}T$, then $G$  is called  Gorenstein $T_n^{d}$-injective  if there exists the following  exact sequence of  ${\mathcal{ET}}$-modules:
$${\mathbf{M}}= \cdots\longrightarrow M_1\longrightarrow M_{0}\longrightarrow M^0\longrightarrow
M^1\longrightarrow\cdots$$ with $G=\ker(M^0\rightarrow M^1)$ such that ${\mathcal{E}}_{T}^{d}(U,{\mathbf{M}})$ leaves this sequence exact whenever $U\in {\rm F.Pres}^{n}T$ with ${\rm T.pdim}(U)<\infty.$
\item [\rm (2)]
If $G\in {\rm Gen}T$, then $G$  is called  Gorenstein $T_n^{d}$-flat  if there exists the following exact sequence of ${\mathcal{FT}}$-modules:
$${\mathbf{N}}= \cdots\longrightarrow N_1\longrightarrow N_{0}\longrightarrow N^0\longrightarrow
N^1\longrightarrow\cdots$$ with $G=\ker(N^0\rightarrow N^1)$ such that ${\Gamma}_{d}^{T}(U,{\mathbf{N}})$ leaves this sequence exact whenever $U\in {\rm F.Pres}^{n}T$ with ${\rm T.fdim}(U)<\infty$.
\end{enumerate}
\end{definition}

In the following theorem, we show that in the case of $(n,T)$-coherent rings, the existence of ${\mathcal{FT}}$-complex and  ${\mathcal{ET}}$-complex of a module is sufficient to be Gorenstein $T_n^{d}$-flat and Gorenstein $T_n^{d}$-injective.

\begin{thm}\label{2.3}
Let $R$ be a $(n,T)$-coherent. Then
\begin{enumerate}
\item [\rm (1)]
 $G\in{\rm Cogen}T$ is
Gorenstein $T_n^{d}$-injective if and only if there is an exact sequence
$${\mathbf{M}}= \cdots\longrightarrow M_1\longrightarrow M_{0}\longrightarrow M^0\longrightarrow
M^1\longrightarrow\cdots$$
of ${\mathcal{ET}}$-modules such that $G=\ker(M^0\rightarrow M^1)$.
\item [\rm (2)]
 $G\in{\rm Gen}T$ is Gorenstein $T_n^{d}$-flat if and only if there is an exact sequence 
$${\mathbf{N}}= \cdots\longrightarrow N_1\longrightarrow N_{0}\longrightarrow N^0\longrightarrow
N^1\longrightarrow\cdots$$ of  ${\mathcal{FT}}$-modules  such that $G=\ker(N^0\rightarrow N^1)$.

\end{enumerate}
\end{thm}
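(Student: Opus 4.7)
The forward implication in each of (1) and (2) is built into Definition~\ref{2.76}, so only the converses require work. I write out (1); part (2) is identical after substituting ${\Gamma}_{d}^{T}(U,-)$ for ${\mathcal{E}}_{T}^{d}(U,-)$, $T_n^d$-flat for $T_n^d$-injective, and ${\rm T.fdim}$ for ${\rm T.pdim}$, with the dimension shifts running to the right end of $\mathbf{N}$ rather than to the left.

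Assume $\mathbf{M}$ is an exact complex of ${\mathcal{ET}}$-modules with $G=\ker(M^0\to M^1)$. Re-index it as $(M^i)_{i\in\mathbb{Z}}$ and put $G^i:=\ker(M^i\to M^{i+1})$, so that every $0\to G^i\to M^i\to G^{i+1}\to 0$ is short exact. Fix $U\in{\rm F.Pres}^{n}T$ with $m:={\rm T.pdim}(U)<\infty$; the aim is to show that applying ${\mathcal{E}}_{T}^{d}(U,-)$ leaves $\mathbf{M}$ exact. Two preparations are needed. First, Lemma~\ref{2.io}(3) upgrades $(n,T)$-coherence by induction: if the top syzygy of a length-$(n{+}1)$ resolution sits in ${\rm F.Pres}^{n}T$, then by hypothesis it sits in ${\rm F.Pres}^{n+1}T$, so the resolution extends. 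Iterating gives ${\rm F.Pres}^{n}T={\rm F.Pres}^{\infty}T$, and $U$ admits an ${\rm F.Add}\,T$-resolution $\cdots\to T_1\to T_0\to U\to 0$ whose every syzygy $K_j$ still belongs to ${\rm F.Pres}^{n}T$.

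Second, standard dimension shifting in the first variable---using ${\mathcal{E}}_{T}^{p}(T_j,-)=0$ for $p\geq 1$ since $T_j\in{\rm Add}\,T$---yields
\[{\mathcal{E}}_{T}^{k}(U,M)\;\cong\;{\mathcal{E}}_{T}^{d+1}(K_{k-d-1},M)\;=\;0\qquad(k\geq d+1,\ M\in{\mathcal{ET}}),\]
because $K_{k-d-1}\in{\rm F.Pres}^{n}T$ and $M$ is $T_n^d$-injective. Plugging this vanishing into the long exact sequence of ${\mathcal{E}}_{T}^{\ast}(U,-)$ attached to $0\to G^i\to M^i\to G^{i+1}\to 0$ collapses consecutive terms to isomorphisms ${\mathcal{E}}_{T}^{d+1}(U,G^{i+1})\cong{\mathcal{E}}_{T}^{d+2}(U,G^i)$; iterating gives ${\mathcal{E}}_{T}^{d+1}(U,G^{i+1})\cong{\mathcal{E}}_{T}^{d+1+r}(U,G^{i+1-r})$ for every $r\geq 0$. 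Since $\mathbf{M}$ is unbounded on the left, $G^{i+1-r}$ exists for all $r$; taking $r$ so large that $d+1+r>m$ forces the right-hand side to vanish, because ${\rm T.pdim}(U)=m$ implies ${\mathcal{E}}_{T}^{k}(U,-)=0$ for $k>m$. Hence ${\mathcal{E}}_{T}^{d+1}(U,G^j)=0$ for every $j\in\mathbb{Z}$.

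With this vanishing, each long exact sequence degenerates to $0\to{\mathcal{E}}_{T}^{d}(U,G^i)\to{\mathcal{E}}_{T}^{d}(U,M^i)\to{\mathcal{E}}_{T}^{d}(U,G^{i+1})\to 0$, and splicing these sequences over $i$ recovers the exactness of ${\mathcal{E}}_{T}^{d}(U,\mathbf{M})$, completing the converse. The hard part is the bi-infinite dimension shift: it simultaneously consumes the unboundedness of $\mathbf{M}$ on both sides, the finiteness of ${\rm T.pdim}(U)$ (needed to terminate the shift), and $(n,T)$-coherence (needed to keep every syzygy of $U$ inside ${\rm F.Pres}^{n}T$). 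Removing any of the three ingredients breaks the argument.
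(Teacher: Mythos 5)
Your strategy for the converse is genuinely different from the paper's, and it works completely only for $d=0$. The paper never looks at the kernels of $\mathbf{M}$: it runs a double induction, first on $d$ (using $0\to K\to T_0\to U\to 0$ and the first-variable shift ${\mathcal{E}}_{T}^{d}(U,\mathbf{M})\cong{\mathcal{E}}_{T}^{d-1}(K,\mathbf{M})$ to reduce to $d=0$), and then, for $d=0$, on $m={\rm T.pdim}(U)$, comparing the complexes ${\rm Hom}(U,\mathbf{M})$, ${\rm Hom}(T_0,\mathbf{M})$ and ${\rm Hom}(L,\mathbf{M})$ inside a short exact sequence of complexes and quoting the long exact homology sequence. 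You instead shift dimension in the second variable along the complex itself, proving ${\mathcal{E}}_{T}^{d+1}(U,G^{j})=0$ for every kernel $G^{j}$ by pushing the degree above $m$ using the left-unboundedness of $\mathbf{M}$. That vanishing argument is correct (and at $d=0$ your whole proof closes, because left exactness of ${\rm Hom}(U,-)$ supplies the injectivity you need for free; likewise $\Gamma_{-1}=0$ closes the flat case at $d=0$).

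The gap is in the last step for $d\geq 1$. From ${\mathcal{E}}_{T}^{d+1}(U,G^{i})=0$ the long exact sequence only gives exactness of ${\mathcal{E}}_{T}^{d}(U,G^{i})\to{\mathcal{E}}_{T}^{d}(U,M^{i})\to{\mathcal{E}}_{T}^{d}(U,G^{i+1})\to 0$; the injectivity of the first map, which you assert when you write the sequence with a leading $0$, is equivalent to the vanishing of the connecting homomorphism ${\mathcal{E}}_{T}^{d-1}(U,G^{i+1})\to{\mathcal{E}}_{T}^{d}(U,G^{i})$, and your degree-$(d{+}1)$ computation says nothing about it. This is not cosmetic: since the maps ${\mathcal{E}}_{T}^{d}(U,M^{i})\to{\mathcal{E}}_{T}^{d}(U,G^{i+1})$ are surjective, exactness of ${\mathcal{E}}_{T}^{d}(U,\mathbf{M})$ at the $i$-th spot is \emph{equivalent} to ${\mathcal{E}}_{T}^{d}(U,G^{i+1})\to{\mathcal{E}}_{T}^{d}(U,M^{i+1})$ being injective, i.e.\ to exactly the statement you skipped. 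The hole can be filled — for instance, first prove the $d=0$ case for every $V\in{\rm F.Pres}^{n}T$ of finite $T$-projective dimension, apply it to the syzygies $K_{j}$ of $U$ to get that ${\rm Hom}(K_{d-1},M^{i})\to{\rm Hom}(K_{d-1},G^{i+1})$ is surjective, and transport this through the natural first-variable isomorphism ${\mathcal{E}}_{T}^{d}(U,-)\cong{\mathcal{E}}_{T}^{1}(K_{d-1},-)$ to kill the connecting maps; or simply adopt the paper's induction on $d$ — but as written the converse is not proved for $d\geq 1$, in either part (1) or part (2).
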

\begin{proof}
{(1) ($\Longrightarrow$) : This is a direct consequence of definition.

($\Longleftarrow$) :  By definition, it suffices to show that ${\mathcal{E}}_{T}^{d}(U,{\mathbf{M}})$ is exact for
every module $U\in {\rm F.Pres}^{n}T$  with ${\rm T.pdim} (U)< \infty$. To prove this, we use the induction on $d$. Let $d=0$ and ${\rm T.pdim} (U) = m$, then we  show that ${\rm Hom}(U,{\mathbf{M}})$ is exact.
 To prove this, we use the induction on $m$. The case $m =0$ is clear. Assume that $m \geq 1$. Since ${\rm T.pdim} (U) = m$, there exists an exact sequence
$0\rightarrow L\rightarrow T_{0}\rightarrow U \rightarrow 0$ with $T_{0}\in {\rm F.Add}T\subseteq{\rm F.Pres} ^{n-1}T$. Now, from the $(n,T)$-coherence of $R$ and Lemma \ref{2.io}, we deduce that $L, T_{0}\in {\rm F.Pres} ^{n}T$. Also, ${\rm T.pdim}(L) \leq m-1$ and ${\rm T.pdim} (T_0) =0$. So, the following short exact sequence of complexes exists:
\begin{center}
$
\begin{array}{ccccccccc}
& \vdots&\vdots &\vdots&\\
& \downarrow & \downarrow &\downarrow & \\
0 \longrightarrow &{\rm Hom}(U,M_1) & \longrightarrow {\rm Hom}(T_0,M_1)&\longrightarrow {\rm Hom}(L,M_1)\longrightarrow 0 \\
& \downarrow & \downarrow &\downarrow & \\
0 \longrightarrow &{\rm Hom}(U,M_0)& \longrightarrow {\rm Hom}(T_0,M_0)&\longrightarrow {\rm Hom}(L,M_0)\longrightarrow 0 \\
& \downarrow &\downarrow &\downarrow & \\
0 \longrightarrow &{\rm Hom}(U,M^0)& \longrightarrow {\rm Hom}(T_0,M^0)&\longrightarrow {\rm Hom}(L,M^0)\longrightarrow 0 \\
& \downarrow &\downarrow &\downarrow & \\
0 \longrightarrow &{\rm Hom}(U,M^1)& \longrightarrow {\rm Hom}(T_0,M^1)&\longrightarrow {\rm Hom}(L,M^1)\longrightarrow 0 \\
& \downarrow &\downarrow &\downarrow & \\
& \vdots&\vdots &\vdots&\\
& \parallel & \parallel &\parallel& \\
0 \longrightarrow &{\rm Hom}(U,{\mathbf{M}})& \longrightarrow {\rm Hom}(T_0,{\mathbf{M}})&\longrightarrow {\rm Hom}(L,{\mathbf{M}})\longrightarrow 0. \\

\end{array}
$
\end{center}
\noindent By induction, $ {\rm Hom}(L,{\mathbf{M}})$ and ${\rm Hom}(T_0,{\mathbf{M}})$ are exact, hence ${\rm Hom}(U,{\mathbf{M}})$ is exact by \cite[Theorem 6.10]{rotman}. 

Let $d \geq 1$ and $U\in {\rm F.Pres}^{n}T$. Consider the exact sequence $0\rightarrow K\rightarrow T_{0}\rightarrow U \rightarrow 0$, where $T_{0}\in {\rm F.Add}T$. So the following short exact sequence of complexes exists:
\begin{center}
$
\begin{array}{ccccccccc}
\vdots &\vdots&\\
\downarrow &\downarrow & \\
0 \longrightarrow  {\mathcal{E}}_{T}^{d-1}(K,M_1)&\longrightarrow {\mathcal{E}}_{T}^{d}(U,M_1)\longrightarrow 0 \\
 \downarrow &\downarrow & \\
0 \longrightarrow  {\mathcal{E}}_{T}^{d-1}(K,M_0)&\longrightarrow {\mathcal{E}}_{T}^{d}(U,M_0)\longrightarrow 0 \\\downarrow &\downarrow & \\
0 \longrightarrow {\mathcal{E}}_{T}^{d-1}(K,M^0)&\longrightarrow {\mathcal{E}}_{T}^{d}(U,M^0)\longrightarrow 0 \\
\downarrow &\downarrow & \\
0 \longrightarrow {\mathcal{E}}_{T}^{d-1}(K,M^1)&\longrightarrow {\mathcal{E}}_{T}^{d}(U,M^1)\longrightarrow 0 \\
\downarrow &\downarrow & \\
\vdots &\vdots&\\
 \parallel &\parallel& \\
0 \longrightarrow {\mathcal{E}}_{T}^{d-1}(K,{\mathbf{M}})&\longrightarrow {\mathcal{E}}_{T}^{d}(U,{\mathbf{M}})\longrightarrow 0. \\

\end{array}
$
\end{center}

\noindent By induction, ${\mathcal{E}}_{T}^{d-1}(K,{\mathbf{M}})$ is exact. So, ${\mathcal{E}}_{T}^{d}(U,{\mathbf{M}})$ is exact and hence, $G$ is Gorenstein $T_n^{d}$-flat.

(2) A similar proof to that of (1).
}

\end{proof}

\begin{remark}\label{2}
\begin{enumerate}
\item [\rm (1)]
If $U\in {\rm F.Pres}^{n}T$, then $U\in {\rm F.Pres}^{m}T$ for any $n\geq m.$
\item [\rm (2)]
Every $T_m^{d}$-injective $R$-module is $T_n^{d}$-injective, for any $n\geq m.$
\item [\rm (3)]
Direct sum of $T_n^{d}$-injective $R$-modules is $T_n^{d}$-injective.
\item [\rm (4)]
Every $T_m^{d}$-flat $R$-module is $T_n^{d}$-flat, for any $n\geq m.$
\end{enumerate}
\end{remark}

\begin{cor}\label{2.h}
Let $R$ be an $(n,T)$-coherent ring and $G\in{\rm Cogen}T$  a module. Then the
following assertions are equivalent:
\begin{enumerate}
\item [\rm (1)]
$G$ is Gorenstein $T_n^{d}$-injective;
\item [\rm (2)]
There is an exact sequence $\cdots\rightarrow M_1\rightarrow M_{0}\rightarrow G\rightarrow 0$ of modules, where every $M_i\in{\mathcal{ET}}$;
\item [\rm (3)]
There is a short exact sequence $0\rightarrow L\rightarrow N\rightarrow G\rightarrow 0$ of modules, where
$N\in{\mathcal{ET}}$ and $L $ is Gorenstein $T_n^{d}$-injective.
\end{enumerate}
\end{cor}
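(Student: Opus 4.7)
The plan is to run the cycle $(1)\Rightarrow(2)\Rightarrow(3)\Rightarrow(2)\Rightarrow(1)$, where $(1)\Leftrightarrow(2)$ is the core equivalence and $(2)\Leftrightarrow(3)$ is then a ``shift by one step'' argument. The key leverage throughout is Theorem \ref{2.3}: under the $(n,T)$-coherence hypothesis, one need only produce an exact complex of $\mathcal{ET}$-modules, without separately verifying the ${\mathcal{E}}_T^{d}(U,-)$-exactness condition from Definition \ref{2.76}. I also use from item (6) that any $G\in{\rm Cogen}T$ admits a ${\rm Prod}T$-coresolution $0\to G\to P^0\to P^1\to\cdots$, together with the observation from item (7) that ${\rm Prod}T\subseteq\mathcal{ET}$, since ${\mathcal{E}}_T^{i}(-,P)=0$ for all $i\geq 1$ whenever $P\in{\rm Prod}T$.

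For $(1)\Rightarrow(2)$, simply truncate the defining complex $\mathbf{M}$ to its left half; the induced map $M_0\to G$ is surjective because $G=\ker(M^0\to M^1)={\rm im}(M_0\to M^0)$, so $\cdots\to M_1\to M_0\to G\to 0$ is exact with each $M_i\in\mathcal{ET}$. For $(2)\Rightarrow(1)$, take a ${\rm Prod}T$-coresolution $0\to G\to P^0\to P^1\to\cdots$ of $G$ and splice it with the given left resolution to obtain the exact complex
$$\cdots\to M_1\to M_0\to P^0\to P^1\to\cdots$$
of $\mathcal{ET}$-modules with $G=\ker(P^0\to P^1)$; Theorem \ref{2.3}(1) then concludes that $G$ is Gorenstein $T_n^{d}$-injective.

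For $(2)\Rightarrow(3)$, put $N:=M_0$ and $L:=\ker(M_0\to G)$, so that $0\to L\to N\to G\to 0$ is exact with $N\in\mathcal{ET}$. The truncated tail $\cdots\to M_2\to M_1\to L\to 0$ is an exact left resolution of $L$ by $\mathcal{ET}$-modules; since $L\subseteq N\in{\rm Cogen}T$ forces $L\in{\rm Cogen}T$, the implication $(2)\Rightarrow(1)$ already established (applied now to $L$) shows $L$ is Gorenstein $T_n^{d}$-injective. Conversely, for $(3)\Rightarrow(2)$, apply $(1)\Rightarrow(2)$ to $L$ to obtain an exact sequence $\cdots\to M_1'\to M_0'\to L\to 0$ with each $M_i'\in\mathcal{ET}$, and splice with the monomorphism $L\hookrightarrow N$ from the given short exact sequence to produce the left $\mathcal{ET}$-resolution $\cdots\to M_1'\to M_0'\to N\to G\to 0$ of $G$.

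The main technical obstacle is to check, at each splicing step, that the glued complex remains exact precisely at the gluing point and that the newly-highlighted kernel is the intended module; this is a routine diagram chase matching images and kernels, but it must be done carefully. Once this is in hand, Theorem \ref{2.3} removes the need to verify the higher functorial exactness condition built into Definition \ref{2.76}, and the only ancillary fact required is the standard closure of ${\rm Cogen}T$ under submodules, which is immediate from the defining embedding into some $T^{n}$.
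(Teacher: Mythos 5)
Your proposal is correct and follows essentially the same route as the paper: $(2)\Rightarrow(1)$ by splicing a left $\mathcal{ET}$-resolution with a ${\rm Prod}T$-coresolution of $G$ (using ${\rm Prod}T\subseteq\mathcal{ET}$) and invoking Theorem \ref{2.3}, and $(3)\Rightarrow(2)$ by splicing the resolution of $L$ through $L\hookrightarrow N$. The only cosmetic difference is that you close the cycle via $(2)\Rightarrow(3)$ while the paper gets $(1)\Rightarrow(3)$ directly from the definition; both are fine.
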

\begin{proof}
{
$(1)\Longrightarrow (2)$ and $(1)\Longrightarrow (3)$ follow from definition.

$(2)\Longrightarrow (1)$ For any module $G\in{\rm Cogen}T$, there is an exact sequence
$$0\longrightarrow G\longrightarrow T^{0}\longrightarrow T^1\longrightarrow \cdots$$
where any $T^{i}\in{\rm Prod}T\subseteq{\mathcal{ET}}$.  So, the exact sequence
$$ \cdots\longrightarrow M_1\longrightarrow M_{0}\longrightarrow T^0\longrightarrow
T^1\longrightarrow\cdots$$
of ${\mathcal{ET}}$- modules  exists, where $G={\rm ker}(T^0\rightarrow T^1)$. Therefore, $G$ is Gorenstein $T_n^{d}$-injective, by Theorem \ref{2.3}.

$(3)\Longrightarrow (2)$ Assume that the exact sequence
$$ 0\longrightarrow L\longrightarrow N \longrightarrow G\longrightarrow 0 \ \ (1)$$ exists, where
$N\in{\mathcal{ET}}$ and $L $ is Gorenstein $T_n^{d}$-injective. Since $L$ is Gorenstein $T_n^{d}$-injective, there is an exact sequence
$$\cdots\longrightarrow M_2^{'}\longrightarrow M_1^{'}\longrightarrow M_0^{'}\longrightarrow L\longrightarrow 0\ \ (2)$$
where every $M_i^{'}\in{\mathcal{ET}}$.
Assembling the sequences $(1)$ and $(2)$, we
get the exact sequence
$$\cdots\rightarrow M_2^{'}\rightarrow M_1^{'}\rightarrow M_0^{'}\rightarrow N\rightarrow G\rightarrow 0,$$ where $N, M_i^{'}\in{\mathcal{ET}}$,
 as desired.}
\end{proof}
\begin{cor}\label{2.ty}
Let $R$ be an $(n,T)$-coherent ring and $G\in{\rm Gen}T$ a module. Then the
following assertions are equivalent:
\begin{enumerate}
\item [\rm (1)]
$G$ is Gorenstein $T_n^{d}$-flat;
\item [\rm (2)]
There is an exact sequence $0\rightarrow G\rightarrow N^{0}\rightarrow N^{1}\rightarrow \cdots$ of $R$-modules, where every $N^i\in{\mathcal{FT}}$;
\item [\rm (3)]
There is a short exact sequence $0\rightarrow G\rightarrow M\rightarrow K\rightarrow 0$ of $R$-modules, where
$M\in{\mathcal{FT}}$ and $K $ is Gorenstein $T_n^{d}$-flat.
\end{enumerate}
\end{cor}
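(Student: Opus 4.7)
The plan is to mirror the proof of the preceding Corollary~\ref{2.h}, dualizing every ingredient from the injective/cogeneration side to the flat/generation side: replace $\mathrm{Cogen}T$ by $\mathrm{Gen}T$, $\mathcal{ET}$ by $\mathcal{FT}$, $\mathrm{Prod}T$ by $\mathrm{Add}T$, the coresolution of item~(6) by the $\mathrm{Add}T$-resolution given there, and the functors $\mathcal{E}_T^d(-,?)$ by $\Gamma_d^T(?,-)$. The implications $(1)\Longrightarrow(2)$ and $(1)\Longrightarrow(3)$ are immediate from Definition~\ref{2.76}(2): the right-hand half of the defining complex $\mathbf{N}$ is precisely the coresolution required in~(2), and truncating $\mathbf{N}$ at $N^0$ yields a short exact sequence $0\to G\to N^0\to K\to 0$ in which $N^0\in\mathcal{FT}$ and $K=\ker(N^1\to N^2)$ carries the shifted complex, hence is itself Gorenstein $T_n^d$-flat.

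For $(2)\Longrightarrow(1)$, I would begin from a chosen coresolution $0\to G\to N^0\to N^1\to\cdots$ of $\mathcal{FT}$-modules. Since $G\in\mathrm{Gen}T$, item~(6) gives $G\in\mathrm{Pres}^\infty T$, so there is an $\mathrm{Add}T$-resolution $\cdots\to T_1\to T_0\to G\to 0$; because every member of $\mathrm{Add}T$ is $T$-projective and therefore lies in $\mathcal{FT}$ (this is the flat analogue of the $\mathrm{Prod}T\subseteq\mathcal{ET}$ step used in Corollary~\ref{2.h}), splicing produces a doubly-infinite exact complex
\[
\cdots\longrightarrow T_1\longrightarrow T_0\longrightarrow N^0\longrightarrow N^1\longrightarrow\cdots
\]
of $\mathcal{FT}$-modules with $G=\ker(N^0\to N^1)$. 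The $(n,T)$-coherence hypothesis then lets me invoke Theorem~\ref{2.3}(2) to upgrade the bare exactness of this complex to exactness after applying $\Gamma_d^T(U,-)$ for every $U\in\mathrm{F.Pres}^n T$ with $\mathrm{T.fdim}(U)<\infty$, giving the Gorenstein $T_n^d$-flat property.

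For $(3)\Longrightarrow(2)$, I would apply the already-established implication $(1)\Longrightarrow(2)$ to the Gorenstein $T_n^d$-flat module $K$, obtaining an exact coresolution $0\to K\to N^0\to N^1\to\cdots$ with each $N^i\in\mathcal{FT}$, and then splice it with the given short exact sequence to obtain
\[
0\longrightarrow G\longrightarrow M\longrightarrow N^0\longrightarrow N^1\longrightarrow\cdots,
\]
in which $M\in\mathcal{FT}$ plays the role of the first coresolving term, yielding~(2) as required.

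The only substantive ingredient beyond symbol-pushing is the inclusion $\mathrm{Add}T\subseteq\mathcal{FT}$, which is the exact dual of the inclusion $\mathrm{Prod}T\subseteq\mathcal{ET}$ the author used in Corollary~\ref{2.h}; once this is in hand the whole chain of equivalences is routine. The main thing to be careful about is the order of establishment: one must prove $(2)\Longrightarrow(1)$ before using $(1)\Longrightarrow(2)$ inside the argument for $(3)\Longrightarrow(2)$, so that no circularity arises.
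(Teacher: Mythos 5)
Your proposal is correct and follows essentially the same route as the paper: the implications $(1)\Rightarrow(2)$ and $(1)\Rightarrow(3)$ from the definition, the splice of an ${\rm Add}T$-resolution of $G$ (using ${\rm Add}T\subseteq\mathcal{FT}$) with the given coresolution plus Theorem~\ref{2.3}(2) for $(2)\Rightarrow(1)$, and the splice of the short exact sequence with a coresolution of $K$ for $(3)\Rightarrow(2)$. Your explicit remarks on the inclusion ${\rm Add}T\subseteq\mathcal{FT}$ and on avoiding circularity in the order of implications are sensible but do not change the argument.
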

\begin{proof}
{
$(1)\Longrightarrow (2)$ and $(1)\Longrightarrow (3)$ follow from definition.

$(2)\Longrightarrow (1)$ For any $R$-module $G\in{\rm Gen}T$, there is an exact sequence
$$\cdots\longrightarrow T_{1}\longrightarrow T_{0}\longrightarrow G\longrightarrow 0, $$
where any $T_{i}\in{\rm Add}T\subseteq {\mathcal{FT}}$. Thus, the exact sequence
$$ \cdots\longrightarrow T_1\longrightarrow T_{0}\longrightarrow N^0\longrightarrow
N^1\longrightarrow\cdots$$
of  ${\mathcal{FT}}$-modules  exists, where $G={\rm ker}(N^0\rightarrow N^1)$. Therefore by Theorem \ref{2.3}, $G$ is Gorenstein $T_n^{d}$-flat, 

$(3)\Longrightarrow (2)$ Assume that the exact sequence
$$ 0\longrightarrow G\longrightarrow M \longrightarrow K\longrightarrow 0 \ \ (1)$$ exists, where
$M\in{\mathcal{FT}}$ and $K $ is Gorenstein $T_n^{d}$-flat. Since $K$ is Gorenstein $T_n^{d}$-flat, there is an exact sequence
$$0\longrightarrow K\longrightarrow (N^0)^{'}\longrightarrow (N^1)^{'}\longrightarrow (N^2)^{'}\longrightarrow \cdots\ \ (2)$$
where every $(N^i)^{'}\in{\mathcal{FT}}$.
Assembling the sequences $(1)$ and $(2)$, we
get the exact sequence
$$0\rightarrow G\rightarrow M\rightarrow (N^0)^{'}\rightarrow (N^1)^{'}\rightarrow (N^2)^{'}\rightarrow \cdots,$$ where $M, (N^i)^{'}\in{\mathcal{FT}}$, as desired.
}
\end{proof}

\begin{prop}\label{2.5}
Let $G$ be a module. Then:
\item [\rm (1)]
If $G\in{\rm Cogen}T$ is Gorenstein $T_n^{d}$-injective, then ${\mathcal{E}}_{T}^{i}(U, G)=0$
for any $i>d$ and every $U\in {\rm F.Pres}^nT$  with ${\rm T.pdim}(U) <\infty$.
\item [\rm (2)]
If $0\rightarrow G\rightarrow G_{0}\rightarrow G_{1}\rightarrow\cdots \rightarrow G_{m-1}\rightarrow N\rightarrow 0$ is an exact sequence of
modules where every $G_j$ is a Gorenstein $T_n^{d}$-injective and $G_j\in{\rm Cogen}T$, then  ${\mathcal{E}}_{T}^{i}(U, N)={\mathcal{E}}_{T}^{m+i}(U, G)$ for any $i>d$ with ${\rm T.pdim} (U) <\infty$.
\item [\rm (3)]
If $G\in{\rm Gen}T$ is Gorenstein $T_n^{d}$-flat, then ${\Gamma}_{i}^{T}(U, G)=0$
for any $i>d$ and every $U\in {\rm F.Pres}^nT$ with ${\rm T.fdim}(U) <\infty$.
\item [\rm (4)]
If $0\rightarrow N\rightarrow G_{m-1}\rightarrow G_{m-2}\rightarrow\cdots \rightarrow G_{0}\rightarrow G\rightarrow 0$ is an exact sequence of
modules where every $G_i$ is a Gorenstein $T_n^{d}$-flat and $G_i\in{\rm Gen}T$, then ${\Gamma }_{i}^{T}(U, N)={\Gamma}_{m+i}^{T}(U, G)$  with ${\rm T.fdim} (U) <\infty$.
\end{prop}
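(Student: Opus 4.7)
The strategy for part (1) is to exploit the complete resolution $\mathbf{M}$ from the definition together with the finiteness of $\mathrm{T.pdim}(U)$. Relabel the left arm of $\mathbf{M}$ so that it reads $(M^j)_{j\in\mathbb{Z}}$ and set $K^j=\ker(M^j\to M^{j+1})$; then $K^0=G$ and each $0\to K^j\to M^j\to K^{j+1}\to 0$ is short exact in $\mathrm{Cogen}T$. Because the same complex $\mathbf{M}$ re-centred at any $K^j$ is still a complete $\mathcal{ET}$-resolution whose $\mathcal{E}_T^d(U,-)$-application is the same exact complex, every $K^j$ is itself Gorenstein $T_n^d$-injective. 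It therefore suffices to prove the base vanishing $\mathcal{E}_T^{d+1}(U,K^j)=0$ for every $j$, and to climb from there.

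For the base case, fix $U\in\mathrm{F.Pres}^nT$ with $\mathrm{T.pdim}(U)=m<\infty$. I first plan to upgrade the $T_n^d$-injectivity vanishing $\mathcal{E}_T^{d+1}(U,M^j)=0$ to $\mathcal{E}_T^i(U,M^j)=0$ for \emph{every} $i>d$. This upgrade proceeds by induction on $m$: the case $m=0$ is immediate because $U\in\mathrm{F.Add}T$ makes $\mathcal{E}_T^i(U,-)$ vanish for $i\ge 1$, and the inductive step takes a short exact sequence $0\to L\to P\to U\to 0$ with $P\in\mathrm{F.Add}T$ and $\mathrm{T.pdim}(L)=m-1$; the $\mathrm{F.Pres}$ analogue of Lemma~\ref{2.io}(3), combined with $(n,T)$-coherence, places $L$ in $\mathrm{F.Pres}^nT$, so dimension shifting on $U$ delivers $\mathcal{E}_T^i(U,M^j)\cong \mathcal{E}_T^{i-1}(L,M^j)$ for $i\ge 2$ and the induction closes. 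Granted the upgrade, the long exact sequence attached to $0\to K^j\to M^j\to K^{j+1}\to 0$ collapses, for $i\ge d+1$, to the isomorphism $\mathcal{E}_T^i(U,K^{j+1})\cong \mathcal{E}_T^{i+1}(U,K^j)$; iterating down the left arm yields
\[\mathcal{E}_T^{d+1}(U,G)\;\cong\;\mathcal{E}_T^{d+1+k}(U,K^{-k})\]
for every $k\ge 0$, and choosing $k$ with $d+1+k>m$ kills the right-hand side by $\mathrm{T.pdim}(U)=m$. Hence $\mathcal{E}_T^{d+1}(U,G)=0$, and similarly $\mathcal{E}_T^i(U,G)\cong \mathcal{E}_T^{d+1}(U,K^{i-d-1})=0$ for every $i>d+1$.

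Part (2) is a standard dimension shift on top of (1). Slice $0\to G\to G_0\to\cdots\to G_{m-1}\to N\to 0$ into short exact sequences $0\to C_j\to G_j\to C_{j+1}\to 0$ with $C_0=G$ and $C_m=N$. For $i>d$, part (1) applied to the Gorenstein $T_n^d$-injective $G_j$ yields $\mathcal{E}_T^i(U,G_j)=\mathcal{E}_T^{i+1}(U,G_j)=0$, so the long exact sequence collapses to $\mathcal{E}_T^i(U,C_{j+1})\cong \mathcal{E}_T^{i+1}(U,C_j)$; composing the $m$ isomorphisms produces $\mathcal{E}_T^i(U,N)\cong \mathcal{E}_T^{i+m}(U,G)$.

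Parts (3) and (4) are the formal duals: replace $\mathcal{E}_T^\bullet(U,-)$ by $\Gamma_\bullet^T(U,-)$, $\mathcal{ET}$ by $\mathcal{FT}$, $\mathrm{T.pdim}$ by $\mathrm{T.fdim}$, and ``Gorenstein $T_n^d$-injective'' by ``Gorenstein $T_n^d$-flat''. The two arguments transfer verbatim because $\Gamma_\bullet^T(U,-)$ also turns short exact sequences in its second variable into long exact sequences. The main obstacle throughout is the auxiliary upgrade in the base case of (1) (and its dual in (3)): lifting the single-degree vanishing $\mathcal{E}_T^{d+1}(U,M^j)=0$ to all $i>d$ relies on the kernel stability of $\mathrm{F.Pres}^nT$ under the dimension-shifting short exact sequence $0\to L\to P\to U\to 0$, and this is what makes $(n,T)$-coherence the natural context for the result even though it is not flagged as an explicit hypothesis here.
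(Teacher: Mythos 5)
Your proof is correct and follows essentially the same route as the paper's: dimension-shift along the left arm of the complete $\mathcal{ET}$-resolution until the cohomological degree exceeds $\mathrm{T.pdim}(U)=m$, then slice the sequence in (2) into short exact pieces and collapse the long exact sequences using (1). The only difference is that you explicitly justify upgrading $\mathcal{E}_T^{d+1}(U,M^j)=0$ to $\mathcal{E}_T^{i}(U,M^j)=0$ for all $i>d$ (correctly flagging the implicit reliance on $(n,T)$-coherence), a step the paper simply asserts.
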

\begin{proof}
{
(1)  Let $G$ be a Gorenstein $T_n^{d}$-injective $R$-module, and ${\rm T.pdim}(U)=m<\infty$.
Then by hypothesis, the following ${\mathcal{ET}}$-resolution of $G$ exists:
$$0\rightarrow L\rightarrow M_{m-1}\rightarrow \cdots \rightarrow M_{0}\rightarrow G\rightarrow 0.$$
So, ${\rm {\mathcal{E}}}_{T}^{i}(U, M_j)= 0$ for every $0 \leq j \leq m-1$ and any $i>d$, since $U\in {\rm F.Pres}^nT$ and any $M_j\in{\mathcal{ET}}.$  Thus by \cite[Proposition 2.2]{shaveisicam}, we deduce that
 ${\rm {\mathcal{E}}}_{T}^{i}(U,G)\cong{\rm {\mathcal{E}}}_{T}^{m+i}(U, L)$. Therefore ${\mathcal{E}}_{T}^{i}(U, G)=0$, since ${\rm T.pdim}(U)=m<\infty$.

(2) Setting $G_m=N$ and $K_{j-1}=\ker (G_{j-1}\rightarrow G_{j})$, for every $0\leq j\leq m$, the short exact sequence
$0\rightarrow K_{j-1}\rightarrow G_{j-1}\rightarrow K_{j}\rightarrow 0$ exists. Thus by (1), the induced exact sequences
$$0={\rm {\mathcal{E}}}_R^{i}(U,G_{j-1})\rightarrow{\rm {\mathcal{E}}}_R^{i}(U,K_{j})\rightarrow{\rm {\mathcal{E}}}_R^{i+1}(U,K_{j-1})\rightarrow
{\rm {\mathcal{E}}}_R^{i+1}(U,G_{j-1})=0$$
exists and so ${\rm {\mathcal{E}}}_T^{i}(U,K_{j})\cong{\rm {\mathcal{E}}}_R^{i+1}(U,K_{j-1})$. Since $K_0=G$, we have
$${\rm {\mathcal{E}}}_R^{m+i}(U,N)\cong{\rm {\mathcal{E}}}_R^{m+i-1}(U,K_{m-1})\cong\cdots\cong {\rm {\mathcal{E}}}_R^{i}(U,G),$$
as desired.

(3) and (4) are similar to the proof of (1) and (2).
}
\end{proof}


\begin{lem}\label{2.57}
Let $ 0\rightarrow A\stackrel{\displaystyle f}\rightarrow B\stackrel{\displaystyle g}\rightarrow C\rightarrow 0$ be an exact sequence. Then
\begin{enumerate}
\item [\rm (1)]
If $A$ is $T$-injective and $A, B, C \in {\rm Cogen}T$, then $B=A\oplus C$.
\item [\rm (2)]
If $A\in{\rm Copres}^{n}T$ and $C\in{\rm Copres}^{n}T$, then $B\in{\rm Copres}^{n}T.$
\item [\rm (3)]
If $C\in{\rm Copres}^{n}T$ and $B\in{\rm Copres}^{n+1}T$, then $A\in{\rm Copres}^{n+1}T.$
\item [\rm (4)]
If $B\in{\rm Copres}^{n}T$ and $A\in{\rm Copres}^{n+1}T$, then $C\in{\rm Copres}^{n}T.$
\end{enumerate}
\end{lem}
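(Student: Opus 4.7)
The statement is the ${\rm Cogen}T$/${\rm Copres}^nT$ analogue of Lemma~\ref{2.io}, so the overall strategy is to dualize the arguments given there, replacing ${\rm Add}T$-presentations with ${\rm Prod}T$-copresentations and reading every arrow in the opposite direction.

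For part~(1) the plan is to apply the derived functor ${\mathcal{E}}_T^\ast(-,A)$ to the exact sequence $0\to A\to B\to C\to 0$. Since $A$ is $T$-injective and lies in ${\rm Cogen}T$, item~(7) of Section~1 forces $A\in{\rm Prod}T$, whence ${\mathcal{E}}_T^1(C,A)=0$. Combining this with the identification ${\mathcal{E}}_T^0(-,A)\cong{\rm Hom}(-,A)$, the resulting long exact sequence reduces to a surjection ${\rm Hom}(B,A)\twoheadrightarrow{\rm Hom}(A,A)$, so a preimage of $\mathrm{id}_A$ provides a retraction $B\to A$ and hence $B\cong A\oplus C$.

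For part~(2) the plan is induction on $n$, dualizing the diagram in the proof of Lemma~\ref{2.io}(1). At $n=0$, choose embeddings $A\hookrightarrow T_0'$ and $C\hookrightarrow T_0''$ with $T_0',T_0''\in{\rm Prod}T$. Using ${\mathcal{E}}_T^1(C,T_0')=0$, the embedding $A\hookrightarrow T_0'$ extends to a map $B\to T_0'$; coupled with the composite $B\to C\hookrightarrow T_0''$ this produces a monomorphism $h^0\colon B\to T_0'\oplus T_0''$ (verified by the Five Lemma on the evident $3\times 3$ diagram). Taking cokernels yields a new short exact sequence $0\to A^1\to B^1\to C^1\to 0$ in which the outer terms lie in ${\rm Copres}^{n-1}T$, and the inductive hypothesis closes the argument. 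Parts~(3) and~(4) then follow by the standard kernel/cokernel diagram chases: pair the given short exact sequence with the first ${\rm Prod}T$-cosyzygy of~$C$ (resp.\ of~$A$), apply part~(2) together with the Snake Lemma, and read off the required copresentation level from the resulting $3\times 3$ diagram.

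The step I expect to cost the most care is the base case of~(2), specifically the extension of $A\hookrightarrow T_0'$ to $B\to T_0'$, because this depends on the vanishing ${\mathcal{E}}_T^1(C,T_0')=0$ for every $C\in{\rm Cogen}T$ and every $T_0'\in{\rm Prod}T$. The converse of the characterization stated in Section~1(7) is needed here, namely that ${\rm Prod}T$-modules really are relatively injective against the class ${\rm Cogen}T$; once this vanishing is justified (for instance by using the trivial ${\rm Prod}T$-copresentation $0\to T_0'\to T_0'\to 0$ when $T_0'\in{\rm Prod}T$ and then passing to direct summands), the remaining diagram chases are routine and the Snake-Lemma bookkeeping for~(3) and~(4) should present no essential difficulty.
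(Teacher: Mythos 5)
Your treatment of (1) and (2) matches the paper's: part (1) is exactly the paper's argument (the long exact sequence ends in ${\mathcal{E}}_T^1(C,A)=0$, giving a retraction $h$ with $hf=1_A$; note you do not even need the detour through $A\in{\rm Prod}T$, since $T$-injectivity of $A$ already means ${\mathcal{E}}_T^1(-,A)=0$), and part (2) is the dual of Lemma \ref{2.io}(1) by induction, which is all the paper says. Your identification of the key vanishing ${\mathcal{E}}_T^1(C,T_0')=0$ for $C\in{\rm Cogen}T$, $T_0'\in{\rm Prod}T$ as the crux of the base case, justified via the trivial copresentation $0\to T_0'\to T_0'\to 0$, is exactly the right point to isolate.

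For (3) and (4) your sketch is in the right spirit (a $3\times 3$ diagram built from a first ${\rm Prod}T$-cosyzygy plus part (2)), but two details are off. First, for (3) the cosyzygy you need is that of $B$, not of $C$: from $0\to B\to T_0\to L\to 0$ with $T_0\in{\rm Prod}T$ and $L\in{\rm Copres}^{n}T$, the composite $A\hookrightarrow B\hookrightarrow T_0$ has cokernel $D$ sitting in $0\to C\to D\to L\to 0$, so $D\in{\rm Copres}^nT$ by (2) and $0\to A\to T_0\to D\to 0$ gives $A\in{\rm Copres}^{n+1}T$. Starting instead from a cosyzygy of $C$ only yields $0\to A\to B\to T^0$, which embeds $A$ into $B$ rather than into a ${\rm Prod}T$-module, so that route stalls. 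Second, for (4) the argument is not just ``part (2) plus the Snake Lemma'': after comparing the cosyzygies $0\to A\to T_0'\to L'\to 0$ and $0\to B\to T_0\to L\to 0$, the cokernel column is $0\to T_0'\to T_0\to D\to 0$ with $0\to C\to D\to L\to 0$, and the essential step is to invoke part (1) to split $T_0\cong T_0'\oplus D$, deduce ${\mathcal{E}}_T^1(-,D)=0$ and hence $D\in{\rm Prod}T$, before reading off $C\in{\rm Copres}^nT$ from $L\in{\rm Copres}^{n-1}T$. Both fixes are routine, but as written the sketch of (3) would fail and the sketch of (4) omits its one non-trivial ingredient.
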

\begin{proof}
{
(1) 
If $A$ is $T$-injective and $A, B, C \in {\rm Cogen}T$, then we deduce that the
sequence
$$ 0\longrightarrow {\rm Hom}(C,A)\stackrel{\displaystyle g^{*}}\longrightarrow {\rm Hom}(B,A)\stackrel{\displaystyle f^{*}}\longrightarrow
{\rm Hom}(A,A)\longrightarrow {\mathcal{E}}_{T}^{1}(C,A)=0$$
is exact. So, 
there exists $h: B\rightarrow A$ such that $hf=1_{A}.$

(2)
It is similar to the proof of Lemma \ref {2.io}, Part (1).

(3) Let $B\in{\rm Copres}^{n+1}T$ and $C\in{\rm Copres}^nT$,  then the following commutative diagram with exact rows:
\begin{center}
$
 \begin{array}{ccccccccc}
&{\hspace{-7.4cm}0}&{\hspace{-6.cm}0}&  \\
    &{\hspace{-7.4cm}\downarrow} &{\hspace{-6cm}\downarrow}&  \\
  \hspace{-3.3cm}0\longrightarrow A&\hspace{-5.9cm}= {\hspace{-1.8mm}=}  A&&&  \\
    \hspace{-2.3cm}\downarrow & \hspace{-5.3cm}\downarrow &  \\
    0\longrightarrow B \longrightarrow T_{0}\longrightarrow L\longrightarrow 0 \\
    \hspace{-2.3cm}\downarrow &  &\hspace{-6cm}\downarrow &\hspace{-4.5cm}\parallel  &  \\
      0\longrightarrow C\longrightarrow D\longrightarrow L\longrightarrow 0& \\
 &&\hspace{-8.cm}\downarrow &\hspace{-6.5cm}\downarrow&  \\
  & &  \hspace{-8.cm}  0&\hspace{-6.5cm}  0
     \end{array}
$
\end{center}
\noindent exists, where $T_{0} \in {\rm Prod}T$ and $L\in {\rm Copres}^{n}T$. By (2), $D\in {\rm Copres}^{n}T$. So, we deduce that $A\in {\rm Copres}^{n+1}T$.

(4)  Let $A\in{\rm Copres}^{n+1}T$ and $B\in{\rm Copres}^nT$,  then the following commutative diagram with exact rows:
\begin{center}
$
 \begin{array}{ccccccccc}
&{\hspace{-7.8cm}0}&{\hspace{-6.4cm}0}&  \\
    &{\hspace{-7.7cm}\downarrow} &{\hspace{-6.4cm}\downarrow}&  \\
  0\longrightarrow A\longrightarrow T_{0}^{'}\longrightarrow L^{'}\longrightarrow 0   \\
    \hspace{-2.4cm}\downarrow & \hspace{-5.5cm}\downarrow &  \\
    0\longrightarrow B \longrightarrow T_{0}\longrightarrow L\longrightarrow 0 \\
    \hspace{-2.3cm}\downarrow &  &\hspace{-6.cm}\downarrow &\hspace{-4.5cm}\parallel    &  \\
      0\longrightarrow C\longrightarrow D\longrightarrow L\longrightarrow 0& \\
 &&\hspace{-8.cm}\downarrow &\hspace{-6.6cm}\downarrow&  \\
  & &  \hspace{-8.cm}  0&\hspace{-6.6cm}  0
     \end{array}
$
\end{center}
\noindent exists, where $T_{0}, T_{0}^{'}  \in {\rm Prod}T$ and $L\in {\rm Copres}^{n-1}T$. Since $T_{0}^{'}$ is $T$-injective, we have that $T_{0}=T_{0}^{'}\oplus D$ by (1), and $D\in{\rm Cogen}T$. Thus for any module $N$, we have
$$ {\mathcal{E}}_{T}^{1}(N,T_{0})={\mathcal{E}}_{T}^{1}(N,T_{0}^{'}\oplus D)={\mathcal{E}}_{T}^{1}(N,T_{0}^{'})\oplus {\mathcal{E}}_{T}^{1}( N,D)=0.$$ 
Hence $D\in {\rm Prod}T$. On the other hand, $L\in {\rm Copres}^{n-1}T$. Therefore, we conclude that $C\in {\rm Copres}^{n}T.$
}
\end{proof}
\begin{prop}\label{2.7}
Let $R$ be $(n,T)$-coherent.
\begin{enumerate}
\item [\rm (1)]
Let $0\rightarrow M\rightarrow G\rightarrow N\rightarrow 0$ be an exact sequence. If $N\in{\rm Cogen}T$ is Gorenstein $T_n^d$-injective and $M\in{\mathcal{ET}}$, then $G$ is Gorenstein $T_n^d$-injective.
\item [\rm (2)]
Let $0\rightarrow K\rightarrow G\rightarrow N\rightarrow 0$ be an exact sequence. If $K\in{\rm Gen}T$ is Gorenstein $T_n^d$-flat and $N\in{\mathcal{FT}}$, then $G$ is Gorenstein $T_n^d$-flat.

\end{enumerate}
\end{prop}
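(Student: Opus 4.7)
Both parts are proved by dual arguments; I focus on part (1) in detail and indicate the modifications for part (2).

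The plan for (1) is to invoke the equivalence (1)$\Leftrightarrow$(3) of Corollary \ref{2.h}, which reduces the problem to producing a short exact sequence $0\to L\to P\to G\to 0$ with $P\in\mathcal{ET}$ and $L$ Gorenstein $T_n^d$-injective, and to verifying that $G\in{\rm Cogen}T$. Since $N$ is itself Gorenstein $T_n^d$-injective, Corollary \ref{2.h}(3) applied to $N$ yields a short exact sequence $0\to L\to N_1\to N\to 0$ with $N_1\in\mathcal{ET}$ and $L$ Gorenstein $T_n^d$-injective. I would then form the pullback $P$ of the pair $G\to N$ and $N_1\to N$, which produces the two short exact sequences
$$0\to L\to P\to G\to 0\quad\mbox{and}\quad 0\to M\to P\to N_1\to 0.$$

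Both $G$ and $P$ lie in ${\rm Cogen}T$ by iterating Lemma \ref{2.57}(2) across all levels ${\rm Copres}^n T$, applied to $0\to M\to G\to N\to 0$ and $0\to M\to P\to N_1\to 0$ respectively (all four of $M,N,N_1$ are in ${\rm Cogen}T$). The main point is to show $P\in\mathcal{ET}$. For every $U\in{\rm F.Pres}^nT$ the long exact sequence of ${\mathcal{E}}_T^{\bullet}(U,-)$ attached to $0\to M\to P\to N_1\to 0$ contains
$${\mathcal{E}}_T^{d+1}(U,M)\longrightarrow {\mathcal{E}}_T^{d+1}(U,P)\longrightarrow {\mathcal{E}}_T^{d+1}(U,N_1),$$
and the outer terms vanish because $M,N_1\in\mathcal{ET}$. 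Hence ${\mathcal{E}}_T^{d+1}(U,P)=0$, so $P\in\mathcal{ET}$, and Corollary \ref{2.h}(3) now concludes (1).

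Part (2) is dual, replacing pullback with pushout. From the Gorenstein $T_n^d$-flatness of $K$, Corollary \ref{2.ty}(3) supplies $0\to K\to N_1\to K'\to 0$ with $N_1\in\mathcal{FT}$ and $K'$ Gorenstein $T_n^d$-flat. Pushing out $K\to N_1$ along the inclusion $K\to G$ produces $Q$ fitting in $0\to G\to Q\to K'\to 0$ and $0\to N_1\to Q\to N\to 0$. Iterated use of Lemma \ref{2.io}(1) on the second sequence gives $Q\in{\rm Gen}T$, and the long exact sequence of ${\Gamma}_{\bullet}^T(U,-)$ on the same sequence yields ${\Gamma}_{d+1}^T(U,Q)=0$ for each $U\in{\rm F.Pres}^nT$; hence $Q\in\mathcal{FT}$ and Corollary \ref{2.ty}(3) closes the argument.

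The main obstacle is verifying that the extension term $P$ (resp.\ $Q$) coming from the pullback (resp.\ pushout) really sits in $\mathcal{ET}$ (resp.\ $\mathcal{FT}$): both the ${\rm Cogen}T$/${\rm Gen}T$ membership, handled by Lemmas \ref{2.57} and \ref{2.io}, and the vanishing of the appropriate higher relative Ext/Tor, handled by the long exact sequence of ${\mathcal{E}}_T$ or ${\Gamma}_T$ in the second argument, have to be confirmed simultaneously. The hypothesis that $R$ is $(n,T)$-coherent enters only through the use of Corollaries \ref{2.h} and \ref{2.ty}.
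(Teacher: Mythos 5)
Your proposal is correct and follows essentially the same route as the paper: the paper also applies Corollary \ref{2.h}(3) (resp.\ \ref{2.ty}(3)) to $N$ (resp.\ $K$), forms the pullback $D$ (resp.\ pushout $E$) — your $P$ (resp.\ $Q$) — and deduces $D\in\mathcal{ET}$ (resp.\ $E\in\mathcal{FT}$) from the horizontal extension by two $\mathcal{ET}$- (resp.\ $\mathcal{FT}$-) modules before concluding via the vertical sequence. Your explicit long-exact-sequence justification of $P\in\mathcal{ET}$ and the ${\rm Cogen}T$ membership check via Lemma \ref{2.57} simply spell out steps the paper asserts more tersely.
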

\begin{proof}
{

(1) By Lemma \ref{2.57}, $G\in{\rm Cogen}T$, since $M, N \in{\rm Cogen}T$.  $N$ is Gorenstein $T_n^d$-injective. So by Corollary $\ref{2.h}$, there exists an exact sequence of
$0\rightarrow K\rightarrow M^{'}\rightarrow N\rightarrow 0$, where $M^{'}\in{\mathcal{ET}}$ and $K$ is Gorenstein $T_n^d$-injective. 
Now, we consider the following diagram:
\begin{center}
$
\begin{array}{ccccccccc}
&& & & 0 & & 0 & & \\
&& & & \downarrow & & \downarrow & & \\
&& & & K & ={\hspace{-1.5mm}=} & K & & \\
&& & & \downarrow & & \downarrow & & \\
0 &\longrightarrow & M & \longrightarrow & D &\longrightarrow &M' & \longrightarrow & 0 \\
& & \parallel& & \downarrow & & \downarrow & & \\
0 & \longrightarrow & M& \longrightarrow & G& \longrightarrow & N & \longrightarrow & 0 \\
& & & & \downarrow & & \downarrow & & \\
&& & & 0 & & 0 & & \\
\end{array}
$
\end{center}
The exactness of the middle horizontal sequence with $M, M^{'}\in{\mathcal{ET}}$, implies that $D\in{\mathcal{ET}}$. Hence from the middle vertical sequence
and Corollary $\ref{2.h}$, we deduce that $G$ is Gorenstein $T_n^d$-injective.

(2)
By Lemma \ref{2.io}, $G\in{\rm Gen}T$, since $K, N \in{\rm Gen}T$.  $K$ is Gorenstein $T_n^d$-flat. So  by Corollary $\ref{2.ty}$, there exists an exact sequence of
$0\rightarrow K\rightarrow N^{'}\rightarrow L\rightarrow 0$, where $N^{'}\in{\mathcal{FT}}$ and $L$ is Gorenstein $T_n^d$-flat.
Now, we consider the following diagram:
\begin{center}
$
\begin{array}{ccccccccc}
&&  0 & & 0 & & \\
&&  \downarrow & & \downarrow & & \\
& 0 \longrightarrow & K &\longrightarrow &G & \longrightarrow &N&\longrightarrow &  0 \\
& & \downarrow & & \downarrow& & \parallel & & \\
& 0\longrightarrow &N'& \longrightarrow & E & \longrightarrow&N & \longrightarrow &0 \\
& & \downarrow & & \downarrow & & \\
&&  L & ={\hspace{-1.5mm}=} & L & & \\
& &  \downarrow & & \downarrow & & \\
&&  0 & & 0 & & \\
\end{array}
$
\end{center}
The exactness of the middle horizontal sequence with $N, N^{'}\in{\mathcal{FT}}$, implies that $E\in{\mathcal{FT}}$. Hence from the middle vertical sequence
and Corollary $\ref{2.ty}$, we deduce that $G$ is Gorenstein $T_n^d$-flat.

}
\end{proof}

In this part, we show that which conditions under every module in ${\rm Cogen}T$  is
Gorenstein $T_n^d$-injective.
\begin{prop}\label{3.1}
Let $R$ be a ring. The following assertions are equivalent:
\begin{enumerate}
\item [\rm (1)]
Every module in ${\rm Cogen}T$, is Gorenstein $T_n^d$-injective;
\item [\rm (2)]
The ring satisfies the following two conditions:

{\rm (i)} Every $T$-projective module is $T_n^d$-injective.

{\rm (ii)} ${\mathcal{E}}_T^{d+1}(U, N)=0$ for any $N\in{\rm Cogen}T$ and any $U\in{\rm F.Pres}^nT$ with ${\rm T.pdim}(U) <\infty$.
\end{enumerate}
\end{prop}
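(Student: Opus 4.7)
I plan to prove the equivalence in two directions. For $(1) \Rightarrow (2)$, both conditions should fall out directly from Proposition~\ref{2.5}(1). Given any $N \in {\rm Cogen}T$, hypothesis (1) makes $N$ Gorenstein $T_n^d$-injective, so Proposition~\ref{2.5}(1) yields ${\mathcal{E}}_T^{d+1}(U, N) = 0$ for every $U \in {\rm F.Pres}^n T$ with ${\rm T.pdim}(U) < \infty$, giving (ii). For (i), a $T$-projective $P$ sits in ${\rm Add}T \subseteq {\rm Cogen}T$, so by (1) it is Gorenstein $T_n^d$-injective, and the same proposition supplies the vanishing of ${\mathcal{E}}_T^{d+1}(U, P)$ on the test class, which is the $T_n^d$-injectivity of $P$ in this framework.

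For $(2) \Rightarrow (1)$, given $M \in {\rm Cogen}T$, the plan is to build a two-sided complex
\[\mathbf{M} = \cdots \longrightarrow M_1 \longrightarrow M_0 \longrightarrow M^0 \longrightarrow M^1 \longrightarrow \cdots\]
of ${\mathcal{ET}}$-modules with $M = \ker(M^0 \to M^1)$. For the right half, I would use the ${\rm Prod}T$-coresolution $0 \to M \to T^0 \to T^1 \to \cdots$ afforded by the identification ${\rm Cogen}T = {\rm Copres}^\infty T$; each $T^j \in {\rm Prod}T$ is $T$-injective and hence $T_n^d$-injective, so $T^j \in {\mathcal{ET}}$. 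For the left half, I would invoke condition (i), which places ${\rm Add}T$ inside ${\mathcal{ET}}$: iteratively take ${\rm Add}T$-surjections $T_i \to K_{i-1}$ starting from $K_{-1} = M$, with $K_i = \ker(T_i \to K_{i-1}) \subseteq T_i$ staying in ${\rm Cogen}T$ and (by Lemma~\ref{2.io}(3)) staying in the appropriate presentation class so that the construction can continue.

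Finally, I would verify that ${\mathcal{E}}_T^d(U, \mathbf{M})$ is exact for every $U \in {\rm F.Pres}^n T$ with ${\rm T.pdim}(U) < \infty$: split $\mathbf{M}$ into short exact sequences $0 \to K \to L \to K' \to 0$ at each node, run the long exact sequence for ${\mathcal{E}}_T^*(U, -)$, and invoke condition (ii) applied to each syzygy---which lies in ${\rm Cogen}T$ by the preceding step---to kill the obstructing ${\mathcal{E}}_T^{d+1}(U, K)$ terms and obtain position-by-position exactness. The main obstacle I anticipate is the left-half construction: an ${\rm Add}T$-surjection onto an arbitrary $M \in {\rm Cogen}T$ usually requires $M \in {\rm Gen}T$, so I will need to check that the covers genuinely exist for every $M \in {\rm Cogen}T$ in this framework (or enlarge the resolving subclass of ${\mathcal{ET}}$ accordingly); once that is in place, the exactness verification is a routine dimension-shifting exercise driven by (ii).
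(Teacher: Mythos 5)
Your derivation of (2)(i) from Proposition~\ref{2.5}(1) does not close. By definition, $T_n^d$-injectivity of a module $P$ requires ${\mathcal{E}}_T^{d+1}(U,P)=0$ for \emph{every} $U\in{\rm F.Pres}^nT$, with no restriction on ${\rm T.pdim}(U)$; Proposition~\ref{2.5}(1) only yields this vanishing for those $U$ with ${\rm T.pdim}(U)<\infty$, so it proves condition (ii) (which you correctly extract) but strictly less than (i). The paper closes (i) differently: since a $T$-projective $M$ is, by (1), Gorenstein $T_n^d$-injective, it admits an ${\mathcal{ET}}$-resolution $\cdots\to M_1\to M_0\to M\to 0$, and $T$-projectivity of $M$ splits the epimorphism $M_0\to M$, so $M$ is a direct summand of the $T_n^d$-injective module $M_0$ and hence $T_n^d$-injective against the \emph{full} test class. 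You need this (or some other) argument; the dimension-shifting proposition alone cannot reach (i).

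The obstacle you flag in $(2)\Rightarrow(1)$ --- that an ${\rm Add}T$-surjection onto an arbitrary $M\in{\rm Cogen}T$ need not exist --- is real, and you leave it unresolved, so the left half of your complex $\mathbf{M}$ is never actually constructed. The paper's resolution is to use a \emph{free} resolution instead: condition (c) of the tilting definition gives $0\to R\to T_0\to T_1\to 0$ with $T_0,T_1\in{\rm Add}T$, from which one deduces $R\in{\rm Cogen}T$ (via \cite[Proposition 2.1]{shaveisicam} and Lemma~\ref{2.57}); then for $G\in{\rm Cogen}T$ one takes an ordinary free resolution $\cdots\to F_1\to F_0\to G\to 0$, notes each $F_i\in{\rm Cogen}T$ and is $T$-projective, hence $T_n^d$-injective by (i), i.e.\ $F_i\in{\mathcal{ET}}$. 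Splicing this with the ${\rm Prod}T$-coresolution gives the two-sided ${\mathcal{ET}}$-complex, and the exactness of ${\mathcal{E}}_T^d(U,-)$ on it then follows from (ii) applied to $G$, the $F_i$ and the $T^i$, essentially as in your final step. Replace your ${\rm Add}T$-covers by free covers and the argument goes through.
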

\begin{proof}
{
$(1)\Longrightarrow (2)$ The condition $(i)$ follows from this fact that every $T$-projective
module $M$ is Gorenstein $T_n^d$-injective. So, the following ${\mathcal{ET}}$-resolution of $M$ exists:
$$\cdots \rightarrow M_1\rightarrow M_0\rightarrow M\rightarrow 0.$$
Since $M$ is $T$-projective, $M$ is
$T_n^d$-injective as a direct summand of $M_0$. Also,  by Proposition \ref{2.5} and (1),  the condition $(ii)$ follows.

$(2)\Longrightarrow (1)$ Since $T$ is tilting, the exact sequence 
$0\rightarrow R\rightarrow T_{0}\rightarrow T_{1}\rightarrow 0$ exists, where $T_{0}, T_{1}\in{\rm Add}T$. So $T_{0}, T_{1}\in{\rm Gen}T$. Hence, the exact sequence $0\rightarrow K\rightarrow T^{(m)}\rightarrow T_{i}\rightarrow 0$ exists for $i=0,1$. On the other hand, 
$K\subseteq T^{(m)}\subseteq T^{m}$. So, $K,T^{(m)}\in{\rm Cogen}T $. Thus by \cite[Proposition 2.1]{shaveisicam}, $K,T^{(m)}\in{\rm Copres}^\infty T$. Therefore by Lemma \ref{2.57}, $T_{i},R\in{\rm Copres}^kT$ and hence $R\in{\rm Cogen}T$. 
 Let $G\in{\rm Cogen}T$. Choose a ${\rm Prod}T$-resolution $0\rightarrow G\rightarrow T^{0}\rightarrow T^{1}\rightarrow\cdots$ of $G$ and a free resolution $\cdots \rightarrow F_{1}\rightarrow F_{0}\rightarrow G\rightarrow 0$, where every $F_i\in{\rm Cogen}T$. Also by 
Lemma \ref{2.io} and \cite[Proposition 2.1]{shaveisicam}, we get that $F_i\in{\rm Gen}T={\rm Pres}^\infty T$, since $T_{0}, T_{1}\in{\rm Gen}T$. Every projective in ${\rm Gen}T$ is $T$-projective. So by (2), every $F_i$ is $T_{n}^d$-injective. 
 Assembling these resolutions, by Remark \ref{2} and (2)(i), we get the following ${\mathcal{ET}}$-resolotion:
$${\mathbf{A}}=\cdots \rightarrow F_{1}\rightarrow F_{0}\rightarrow T^{0}\rightarrow T^{1}\rightarrow\cdots,$$
where $G={\rm ker}(T^0\rightarrow T^1)$, $K^{i}={\rm ker}(T^i\rightarrow T^{i+1})$ and $K_{i}={\rm ker}(F_{i}\rightarrow F_{i-1})$ for any $i\geq 1$. By  Lemma \ref{2.57}, $ K_{i}, K^{i}\in{\rm Cogen}T $, since $G, T^{i}, F_{i}\in{\rm Cogen}T $. Let $U\in{\rm F.Pres}^nT$ with ${\rm T.p.dim}(U) <\infty$. Then by (2), ${\mathcal{E}}_T^{d+1}(U, G)={\mathcal{E}}_T^{d+1}(U, F_i)={\mathcal{E}}_T^{d+1}(U, T^{i})=0$ for any $i\geq 0$.
So, ${\mathcal{E}}_T^{d}(U,{\mathbf{A}})$ is exact, and hence $G$ is Gorenstein $T_n^d$-injective.}
\end{proof}

\begin{thm}\label{3.2}
Let $R$ be an $(n,T)$-coherent ring. Then the following are equivalent:
\begin{enumerate}
\item [\rm (1)]
Every module in ${\rm Cogen}T$, is Gorenstein $T_n^d$-injective;
\item [\rm (2)]
Every $T$-projective module is $T_n^d$-injective;
\item [\rm (3)]
$R$ is $T_n^d$-injective;
\item [\rm (4)]
Every Gorenstein $T_n^d$-flat is Gorenstein $T_n^d$-injective;
\item [\rm (5)]
Every $T$-flat module is Gorenstein $T_n^d$-injective;
\item [\rm (6)]
Every $T$-projective module is Gorenstein $T_n^d$-injective.
\end{enumerate}
\end{thm}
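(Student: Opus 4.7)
The strategy is to close a cycle of implications. Proposition~\ref{3.1} drives the core equivalence $(1)\Leftrightarrow(2)$, the tilting presentation $0\to R\to T_0\to T_1\to 0$ drives $(2)\Leftrightarrow(3)$, and Proposition~\ref{2.5}(1), combined with the inclusions ${\rm Add}\,T\subseteq{\rm Cogen}\,T$ and ``$T$-projective $\Rightarrow$ $T$-flat'', fits (4), (5), (6) into the cycle.

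For $(1)\Leftrightarrow(2)$ the forward direction is immediate from Proposition~\ref{3.1}$(1)\Rightarrow(2)({\rm i})$. The reverse requires verifying condition $(2)({\rm ii})$ of Proposition~\ref{3.1}, namely $\mathcal{E}_T^{d+1}(U,N)=0$ for $N\in{\rm Cogen}\,T$ and $U\in{\rm F.Pres}^n T$ with $m={\rm T.pdim}(U)<\infty$, which I would prove by induction on $m$: the base $m=0$ is precisely hypothesis (2), and the inductive step uses a short exact sequence $0\to L\to T_0'\to U\to 0$ with $T_0'\in{\rm F.Add}\,T$, where Lemma~\ref{2.io} combined with $(n,T)$-coherence forces $L\in{\rm F.Pres}^n T$ with ${\rm T.pdim}(L)=m-1$, after which the long exact sequence of $\mathcal{E}_T^*(-,N)$ closes the step. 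For $(2)\Leftrightarrow(3)$, hypothesis (2) makes $T_0, T_1\in{\rm Add}\,T$ into $T_n^d$-injective modules; applying the long exact sequence of $\mathcal{E}_T^*(U,-)$ to the tilting presentation (using also that $T_0, T_1$ are $T$-injective, so $\mathcal{E}_T^i(-,T_k)=0$ for $i\geq 1$) yields $\mathcal{E}_T^{d+1}(U,R)=0$. Conversely, every $T$-projective module is a retract of $T^{(\lambda)}$, and $T_n^d$-injectivity propagates from $R$ through the tilting sequence to $T$ and hence to its retracts.

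The remaining conditions fit as follows: $(1)\Rightarrow(6)$ is immediate because ${\rm Add}\,T\subseteq{\rm Cogen}\,T$; $(5)\Rightarrow(6)$ because $T$-projective implies $T$-flat; $(6)\Rightarrow(2)$ uses Proposition~\ref{2.5}(1) to extract $\mathcal{E}_T^{d+1}$-vanishing on the finite-${\rm T.pdim}$ part of ${\rm F.Pres}^n T$ from the Gorenstein $T_n^d$-injective $T$-projective modules, then extends to all of ${\rm F.Pres}^n T$ by the same induction as above; $(1)\Rightarrow(5)$ reduces to showing $T$-flat modules lie in ${\rm Cogen}\,T$ under the tilting hypothesis, as in the proof of Proposition~\ref{3.1} that $R\in{\rm Cogen}\,T$; and $(1)\Leftrightarrow(4)$ uses $(n,T)$-coherence to identify Gorenstein $T_n^d$-flat and Gorenstein $T_n^d$-injective modules on the overlap ${\rm Gen}\,T\cap{\rm Cogen}\,T$. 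The main obstacle I expect is the dimension shift in $(2)\Rightarrow(3)$: the long exact sequence only gives $\mathcal{E}_T^{d+1}(U,R)$ as a subquotient of $\mathcal{E}_T^d(U,T_1)$ unless one also knows that $T_0, T_1$ are $T$-injective, so the subtle step is to extract this from the tilting axiom ${\rm Ext}^i(T,T^{(\lambda)})=0$; a secondary obstacle is the crossover between ${\rm Gen}\,T$ and ${\rm Cogen}\,T$ needed for $(1)\Leftrightarrow(4)$, which again leans on the tilting structure.
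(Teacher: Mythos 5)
Your overall architecture (a cycle of implications anchored at Proposition~\ref{3.1}) matches the paper's, but two of your key steps have genuine gaps. First, your proof of $(2)\Rightarrow(1)$ tries to verify condition $(2)(\mathrm{ii})$ of Proposition~\ref{3.1} by induction on $m={\rm T.pdim}(U)$, closing the step with the long exact sequence coming from $0\to L\to T_0'\to U\to 0$. That sequence reads
$$\mathcal{E}_T^{d}(T_0',N)\longrightarrow\mathcal{E}_T^{d}(L,N)\longrightarrow\mathcal{E}_T^{d+1}(U,N)\longrightarrow\mathcal{E}_T^{d+1}(T_0',N)=0,$$
so $\mathcal{E}_T^{d+1}(U,N)$ is a quotient of $\mathcal{E}_T^{d}(L,N)$ --- a group in degree $d$, about which the inductive hypothesis (vanishing in degree $d+1$ for $L$) says nothing. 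The induction does not close. The paper avoids this entirely: under $(n,T)$-coherence, Theorem~\ref{2.3} and Corollary~\ref{2.h} show that the mere existence of a left $\mathcal{ET}$-resolution already forces Gorenstein $T_n^d$-injectivity, so one proves $(3)\Rightarrow(1)$ by observing that free modules are $T_n^d$-injective (Remark~\ref{2}) and lie in ${\rm Cogen}T$; condition $(2)(\mathrm{ii})$ never has to be checked by hand. The induction inside Theorem~\ref{2.3} succeeds where yours fails because it is an induction on exactness of the entire complex $\mathcal{E}_T^{d}(U,\mathbf{M})$, using the $T_n^d$-injectivity of each term $M_j$ to produce short exact sequences of complexes, not an induction on the vanishing of a single group.

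Second, your $(6)\Rightarrow(2)$ extracts vanishing of $\mathcal{E}_T^{d+1}(U,-)$ from Proposition~\ref{2.5}(1), which applies only to $U\in{\rm F.Pres}^nT$ with ${\rm T.pdim}(U)<\infty$; but $T_n^d$-injectivity requires $\mathcal{E}_T^{d+1}(U,M)=0$ for \emph{every} $U\in{\rm F.Pres}^nT$, including those of infinite $T$-projective dimension, and no induction on ${\rm T.pdim}$ can reach those. The paper's argument is a splitting trick: a $T$-projective module that is Gorenstein $T_n^d$-injective admits an epimorphism from some $M_0\in\mathcal{ET}$, hence is a direct summand of $M_0$ and therefore $T_n^d$-injective outright. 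Similarly, your "main obstacle" in $(2)\Rightarrow(3)$ (proving $T_0,T_1$ are $T$-injective so the connecting map vanishes) is unnecessary: the paper applies Lemma~\ref{2.io} to $0\to R\to T_0\to T_1\to 0$ to get $R\in{\rm Gen}T$, so $R$ is itself $T$-projective and hypothesis (2) applies to $R$ directly. I would re-route your argument through $(2)\Rightarrow(3)\Rightarrow(1)$ and the splitting trick rather than trying to patch the two inductions.
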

\begin{proof}
{$(1)\Longrightarrow (2)$ and $(2)\Longrightarrow (3)$, is hold by Proposition \ref{3.1}. 

$(3)\Longrightarrow (1)$
Let $G\in{\rm Cogen}T$ be a module and $\cdots\rightarrow F_1\rightarrow F_0\rightarrow G\rightarrow 0$ be any
free resolution of $G$. Then, similar to proof ($(2)\Longrightarrow (1)$) of Proposition \ref{3.1},   each $F_i\in{\mathcal{ET}}$. Hence Corollary $\ref{2.h}$ completes the proof.

$(3)\Longrightarrow (4)$ Let $N\in{\rm Gen}T$ is Gorenstein $T_n^d$-flat. Similar to proof ($(2)\Longrightarrow (1)$) from Proposition \ref{3.1}, $N\in{\rm Cogen}T$. So, (2)  follows immediately from (1).

$(4)\Longrightarrow (5)$
every $T$-flat is $T_n^d$-flat and every $T_n^d$-flat is Gorenstein $T_n^d$-flat. So by (4), (5) is hold.

$(5)\Longrightarrow (6)$ Is clear, since every $T$-projective is $T$-flat.

$(6)\Longrightarrow (3)$ Similar to proof ($(1)\Longrightarrow (2)$) from Proposition \ref{3.1}, every $T$-projective module is $T_n^d$-injective. Also, the exact sequence 
$0\rightarrow R\rightarrow T_{0}\rightarrow T_{1}\rightarrow 0$ exists, where $T_{0}, T_{1}\in{\rm Add}T$. So $T_{0}, T_{1}\in{\rm Gen}T$. Thus by \cite[Proposition 2.1]{shaveisicam},  $T_i\in{\rm Gen}T={\rm Pres}^\infty T$ for $i=0,1$. Hence by Lemma \ref{2.io}, $R\in{\rm Gen}T$. Therefore $R$ is $T$-projective and hence, it is $T_n^d$-injective.

}
\end{proof}

Let $R$ be a ring. Then $R$ is $n$-regular if every $n$-presented $R$-module is projective. $M$ is called  $(n,d)$-flat  if ${\rm Tor}_{d+1}^{R}(U,M)=0$ for every $n$-presented $R$-module $U$.
 $M$ is called  $(n,d)$-injective if ${\rm Ext}_{R}^{d+1}(U,M)=0$ for every $n$-presented $R$-module $U$ (see, \cite{ N.I, X.J}).  In particular, if $T=R$, then every $T_n^{d}$-flat module is $(n,d)$-flat, every $T_n^{d}$-injective module is $(n,d)$-injective, every Gorenstein $T_n^{d}$-flat module is  Gorenstein $(n,d)$-flat and every  Gorenstein $T_n^{d}$-injective module Gorenstein $(n,d)$-injective.
\begin{example}\label{3.54} 
\begin{enumerate}
\item [\rm (1)]
{Let $R$ be a $1$-Gorenstein ring and
$0\rightarrow R\rightarrow E^{0}\rightarrow  E^{1} \rightarrow 0$
be  the minimal injective resolution of $R$. Then, $T=E_{0}\oplus E_{1}$ is Gorenstein $T_n^d$-injective and Gorenstein $T_n^d$-flat, since  by \cite{E.E}, $T$ is a tilting
module. 
\item [\rm (2)]
Let $R$ be an $n$-regular ring. Then replacing $T$ by $R$ as an $R$-module of Theorem \ref{3.2},   every $R$-module is Gorenstein $(n,0)$-injective and Gorenstein $(n,0)$-flat, since by \cite[Theorem 3.9]{N.I}, $R$ is $(n,0)$-injective.}
\end{enumerate}
\end{example}

{}

\end{document}